\begin{document}

\title{On derivations and semiprime ideal of rings} {On derivations and semiprime ideal of rings}

\author{G.~S.~Sandhu, N.~Rehman} {Gurninder Singh Sandhu${}^1$ and Nadeem Ur Rehman${}^2$}

\address{${}^1$Department of Mathematics, Patel Memorial National College, Rajpura 140401, India\\
${}^2$Department of Mathematics, Aligarh Muslim University,  Aligarh, 202002, India.}

\email{gurninder\_rs@pbi.ac.in${}^1$, nu.rehman.mm@amu.ac.in${}^2$}

\received{???} \revised{} \accepted{???}
\volumenumber{???} \volumeyear{???}

\setcounter{page}{1}

\numberwithin{equation}{section}

\maketitle

\begin{abstract}
Let $R$ be an associative ring with a nonzero ideal $I$ and a semiprime ideal $T$ such that $T\subsetneq I.$ Let $K$ be a nonempty subset of $R$ and $d:R\to R$ be a derivation of $R$, if $[d(x),x]\in T$ for all $x\in K,$ then $d$ is said to be a $T$-commuting derivation on $K.$ We show that if some specific $T$-valued differential identities are imposed on $I$, then $d$ is $T$-commuting. Moreover, we provide semiprime ideal variant of some known results on derivations.
\end{abstract}

\metadata{16W25}{16N60, 16W10}{Derivation, semiprime ideal, commuting map}

\section{Introduction and motivation}
Let $R$ be an associative ring with center $Z(R).$ Recall, a nonempty subset $T$ of $R$ is said to be a prime (resp. semiprime) ideal if $aRb\subseteq T$ (resp. $aRa\subseteq T$) implies $a\in T$ or $b\in T$ (resp. $a\in T$), where $a,b\in R.$ A ring $R$ is called a prime ring (resp. semiprime) if $(0)$ is the prime (resp. semiprime) ideal of $R$. To denote commutator of any $x,y\in R,$ we write $[x,y]=xy-yx$. The commutator identities: $[x,yz]=[x,y]z+y[x,z],$ $[xy,z]=x[y,z]+[x,z]y$ shall be used frequently in the sequel. An additive function $\delta:R\to R$ is called a derivation of $R$ if $\delta(xy)=\delta(x)y+x\delta(y)$ for all $x,y\in R;$ a known example of derivation is the mapping $x\mapsto [x,a],$ where $a\in R$ is a fixed element, called inner derivation induced by the element $a$. Let $K$ be a nonempty subset of $R$ and $f:R\to R$ be a mapping. Then it is known that $f$ is commuting (resp. centralizing) on $K$ if $[f(x),x]=0$ (resp. $[f(x),x]\in Z(R)$) for all $x\in K.$ Moreover, if $I$ is a nonzero ideal of $R$ and $T$ is a semiprime ideal of $R$ such that $T\subsetneq I$ (i.e., the ideal structure is also defined on $I/T$), then it is straightforward to see that $aIa\subseteq T$ implies $a\in T,$ where $a\in I;$ this fact shall be very useful for us in the subsequent section.   
\par In 1990, Vukman \cite{Vukman1990} demonstrated that if $d$ is a derivation of a prime ring $R$ with char$(R)\neq 2$ such that the mappings $x\mapsto [d(x),x]$ is a commuting on $R,$ then $R$ is commutative. This result clearly refines Posner's second theorem \cite{Posner57}. Following Posner's seminal work, significant interest has emerged in the study of commuting and centralizing derivations in prime and semiprime rings (see, for instance, \cite{Almahdi2020}, \cite{Bresar2004}, \cite{Oukhtite11}, \cite{Mathiu}). The investigation of derivations in prime rings satisfying specific identities has become a key area of research, particularly in exploring the commutative structure of the underlying ring or characterizing such derivations.

Nowadays, a huge number of results are available in various generalized forms of derivation in prime and semiprime rings. Such works can be extended by taking a broader class of rings than prime rings, i.e., rings with a prime ideal (not necessarily having (0) as prime ideal). It is evident that if one considers a derivation of an arbitrary ring with a prime ideal to establish a commutativity theorem for the ring induced by that prime ideal, then a similar conclusion can be readily derived for prime rings. Perhaps having this idea in mind, Alhamdi et al. recently presented a generalization of Posner's second theorem. They proved that if $P$ is a prime ideal of an arbitrary ring $R$ and $d:R\to R$ is a derivation satisfying $[[d(x),x],y]\in P$ for all $x,y\in R,$ then $d$ maps $R$ into $P$ or $R/P$ is commutative. Let us refer to it as the \emph{$P$-variant} of Posner's second theorem. On the other hand while addressing derivations of prime and semiprime rings one always have a strong tool in hand is \emph{the standard theory of differential and polynomial identities}, which is introduced by Kharchenko  and extensively expanded by algebraists such as Beidar, Mikhlev, Martindale, Jacobson, Chuang, Lee, Bre$\check{s}$ar. But in the present generalization technique of the previous studies, one has this limitation to not to have the access to the PI-theory (i.e. the results which are precisely true for the mappings defined on the class of prime or semiprime rings only). For instance, to be more specific in this vein we now mention some crucial results using this tool: 
\begin{itemize}
	\item Bre$\check{s}$ar \cite{Matej1990} proved that if $R$ is a prime ring, $f,g:R\to R$ are functions such that $f(x)yg(z)=g(x)yf(z)$ for all $x,y,z\in R,$ with $f\neq 0,$ then there exists $\lambda\in C$ (the extended centroid of $R$) such that $g=\lambda f.$
	\item In 1993, Lanski \cite{Lanski1993} proved a generalized version of the theorem of Vukman. The author studied the higher commutator $[d(x),x]_{n}=[\cdots[[d(x),x],x]\cdots]=0$ involving derivation $d$ of a prime ring by using the theory of differential identities.  
\end{itemize}  
Obviously, it is not trivial to claim the $P$-variant of these results. In addition to this, as given in \cite{Krempa1984}, Herstein asked the question that \emph{if $R$ is a semiprime ring and $d:R\to R$ is a derivation. Does every minimal prime ideal $P$ of $R$ invariant under $d$?} This problem was settled by Chuang and Lee \cite{Chuang1991} by refuting it. If this problem had had a positive answer, then obviously the $P$-variants of results on prime rings would be rendered trivial. Therefore, investigation of $P$-variants of previous results is non-trivial and can be quit challenging. In this study, we prove $T$-variant ($T$ is a semiprime ideal of the ring $R$) of some well-established results on derivations in prime and semiprime rings. 

\section{Main Results}
We shall adopt a different technique than that employed by Vukman \cite{Vukman1990} and obtain a more general conclusion with a shorter proof.

\begin{theorem}\label{thm-01}
	Let $R$ be a ring with a nonzero ideal $I$ and a semiprime ideal $T$ such that $T\subsetneq I.$ If char$(R/T)\neq 2$ and $[[d(x),x],x]\in T$ for all $x\in I,$ then $d$ is $T$-commuting on $I$.
	\begin{proof}
		Let us suppose that
		\begin{equation}\label{eq-1}
			[[d(x),x],x]\in T 
		\end{equation}
		for all $x\in I.$ Substituting $x+y$ instead of $x$ in (\ref{eq-1}), we have
		\begin{equation}\label{eq-2}
			[[d(x),x],y]+[[d(x),y],y]+[[d(x),y],x]+[[d(y),x],y]+[[d(y),x],x]+[[d(y),y],x]\in T
		\end{equation}
		for all $x,y\in I.$ Using $-x$ instead of $x$ in (\ref{eq-2}) in order to find
		\begin{equation}\label{eq-2.1}
			[[d(x),x],y]-[[d(x),y],y]+[[d(x),y],x]-[[d(y),x],y]+[[d(y),x],x]-[[d(y),y],x]\in T
		\end{equation}
		for all $x,y\in I.$ Jointly considering (\ref{eq-2}) and (\ref{eq-2.1}), we get
		\[
		2([[d(x),x],y]+[[d(x),y],x]+[[d(y),x],x])\in T
		\]
		for all $x,y\in I.$ Using char$(R/T)\neq 2,$ we obtain
		\begin{equation}\label{eq-3}
			[[d(x),x],y]+[[d(x),y],x]+[[d(y),x],x]\in T 
		\end{equation}
		for all $x,y\in I.$ Putting $yz$ for $y$ in (\ref{eq-3}), one may see that
		\[
		\begin{split}
			[[d(x),x],y]z+y[[d(x),x],z]+[[d(x),y],x]z+[d(x),y][z,x]+y[[d(x),z],x]
			\\+[y,x][d(x),z]+[[d(y),x],x]z+[d(y),x][z,x]+d(y)[[z,x],x]+[d(y),x][z,x]
			\\+y[[d(z),x],x]+[y,x][d(z),x]+[[y,x],x]d(z)+[y,x][d(z),x]\in T
		\end{split}
		\]
		for all $x,y,z\in I.$
		Using (\ref{eq-3}), we find
		\begin{equation}\label{eq-4}
			\begin{split}
				[d(x),y][z,x]+[y,x][d(x),z]+2[d(y),x][z,x]+d(y)[[z,x],x]+2[y,x][d(z),x]
				\\+[[y,x],x]d(z)\in T
			\end{split}
		\end{equation}
		for all $x,y,z\in I.$ Replacing $z$ by $zx$ in the equation (\ref{eq-4}) and using it, we find
		\begin{equation}\label{eq-5}
			3[y,x]z[d(x),x]+2[y,x][z,x]d(x)+[[y,x],x]zd(x)\in T 
		\end{equation}
		for all $x,y,z\in I$. Taking $zx$ instead of $z$ in (\ref{eq-5}), we get
		\begin{equation}\label{eq-6}
			3[y,x]zx[d(x),x]+2[y,x][z,x]xd(x)+[[y,x],x]zxd(x)\in T 
		\end{equation}
		for all $x,y,z\in I$. Equation (\ref{eq-5}) also gives us
		\begin{equation}\label{eq-7}
			3[y,x]z[d(x),x]x+2[y,x][z,x]d(x)x+[[y,x],x]zd(x)x\in T  
		\end{equation}
		for all $x,y,z\in I$. Considering together Eq. (\ref{eq-6}) and (\ref{eq-7}) in order to obtain 
		\begin{equation}\label{eq-8}
			2[y,x][z,x][d(x),x]+[[y,x],x]z[d(x),x]\in T 
		\end{equation}
		for all $x,y,z\in I$. Replacing $y$ by $d(x)y$ in (\ref{eq-8}) and using it along with our initial hypothesis, we find
		\begin{equation}\label{eq-8.1}
			2[d(x),x]y[z,x][d(x),x]+2[d(x),x][y,x]z[d(x),x]\in T
		\end{equation}
		for all $x,y,z\in I.$ Replacing $z$ by $z[d(x),x]t$ in (\ref{eq-8.1}), we get
		\[
		\begin{split}
			2[d(x),x]y[z,x][d(x),x]t[d(x),x]+2[d(x),x]yz[d(x),x][t,x][d(x),x]
			\\+2[d(x),x][y,x]z[d(x),x]t[d(x),x]\in T
		\end{split}
		\]
		for all $x,y,z\in I.$ In view of (\ref{eq-8.1}) it forces that 
		$$
		2[d(x),x]yz[d(x),x][t,x][d(x),x]\in T
		$$
		for all $x,y,z\in I.$ Hence, we obtain
		\begin{equation}\label{eq-8.2}
			2[d(x),x][t,x][d(x),x]\in T
		\end{equation}
		for all $x,t\in I.$
		Now, let us again look at the equation (\ref{eq-8.1}) and substitute $yd(x)$ for $y$ in it. Then we see that
		\[
		2[d(x),x]yd(x)[z,x][d(x),x]+2[d(x),x]y[d(x),x]z[d(x),x]+2[d(x),x][y,x]d(x)z[d(x),x]\in T
		\]
		for all $x,y,z\in I.$
		It implies that
		\begin{equation}\label{eq-9.1}
			\begin{split}
				2[d(x),x]w[d(x),x]yd(x)[z,x][d(x),x]t[d(x),x]+2[d(x),x]w[d(x),x]y[d(x),x]\\
				z[d(x),x]t[d(x),x]+2[d(x),x]w[d(x),x][y,x]d(x)z[d(x),x]t[d(x),x]\in T
			\end{split}
		\end{equation}
		for all $x,y,z,w,t\in T.$ 
		\par Going back to equation (\ref{eq-4}), to collect more useful relations. Replacing $y$ by $xy$ in (\ref{eq-4}) to observe that
		\begin{equation}\label{eq-10}
			3[d(x),x]y[z,x]+2d(x)[y,x][z,x]+d(x)y[[z,x],x]\in T 
		\end{equation}
		for all $x,y,z\in I$. Taking $z=zd(x)$ in (\ref{eq-10}), we have
		\begin{equation}\label{eq-11}
			3[d(x),x]yz[d(x),x]+2d(x)[y,x]z[d(x),x]+2d(x)y[z,x][d(x),x]\in T 
		\end{equation}
		for all $x,y\in I$. Changing $z$ by $[d(x),x]z$ in (\ref{eq-11}) and using (\ref{eq-8.2}) along with our initial hypothesis, we conclude that
		$$
		3[d(x),x]y[d(x),x]z[d(x),x]+2d(x)[y,x][d(x),x]z[d(x),x]\in T 
		$$
		for all $x,y,z\in I.$ It can be seen as
		\begin{equation}\label{eq-11.1}
			3[d(x),x]z[d(x),x]t[d(x),x]+2d(x)[z,x][d(x),x]t[d(x),x]\in T 
		\end{equation}
		for all $x,y,z\in I.$ In the same way, we obtain the following relation form (\ref{eq-5})
		\begin{equation}\label{eq-12}
			3[d(x),x]w[d(x),x]y[d(x),x]+2[d(x),x]w[d(x),x][y,x]d(x)\in T  
		\end{equation}
		for all $x,y,w\in I$. Using (\ref{eq-11}) and (\ref{eq-12}) in (\ref{eq-9.1}), we see that
		\begin{equation}\label{eq-13}
			-4[d(x),x]w[d(x),x]y[d(x),x]z[d(x),x]t[d(x),x]\in T
		\end{equation}
		for all $x,y,z,w,t\in I.$ Using the char$(R/T)\neq 2$, we find 
		\[
		[d(x),x]w[d(x),x]y[d(x),x]z[d(x),x]t[d(x),x]\in T
		\]
		for all $x,y,z,w,t\in I.$ It forces $[d(x),x]\in T$ for all $x\in I.$
	\end{proof}
\end{theorem}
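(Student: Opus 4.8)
The plan is to run a ``by hand'' version of the Vukman--Lanski argument entirely modulo $T$: for a bare semiprime ideal the machinery of differential identities is unavailable, and $d$ need not descend to $R/T$, so no structure-theoretic shortcut is open. Write $a_x=[d(x),x]$; since $x\in I$ and $I$ is an ideal we have $a_x=d(x)x-xd(x)\in I$, so once products of $a_x$ have been pushed into $T$ we may invoke the fact recorded in the introduction: $bIb\subseteq T$ with $b\in I$ forces $b\in T$. The target is therefore, for each fixed $x\in I$, a relation asserting that a sufficiently long alternating product of $a_x$ with elements of $I$ lies in $T$.

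First I would linearize. Substituting $x+y$ for $x$ in $[[d(x),x],x]\in T$ cancels the pure-$x$ and pure-$y$ terms; substituting $-x$ for $x$ in the resulting relation and comparing (here $\mathrm{char}(R/T)\neq 2$ removes the factor $2$ that appears) isolates the part even in $x$,
\[
[[d(x),x],y]+[[d(x),y],x]+[[d(y),x],x]\in T\qquad(x,y\in I).
\]
From here everything is a matter of extracting consequences by substituting products for the free variables, expanding via $[x,yz]=[x,y]z+y[x,z]$ and $[xy,z]=x[y,z]+[x,z]y$, and each time subtracting the relations already proved so that only the new content survives. Concretely: replace $y$ by $yz$; in the result replace $z$ by $zx$, also right-multiply the identity by $x$, and subtract — this strips off the outer commutator and leaves an identity whose terms are products of $a_x$, of $d(x)$, and of $[y,x]$, $[[y,x],x]$. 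Iterating substitutions of the same flavour ($y\mapsto xy$, $y\mapsto d(x)y$, $y\mapsto yd(x)$, $z\mapsto zd(x)$, $z\mapsto [d(x),x]z$, $z\mapsto z[d(x),x]t$) and repeatedly feeding earlier lines back in, the terms in which $d$ is applied to the new variables cancel against one another, and one is left with a purely multiplicative conclusion of the shape
\[
[d(x),x]\,w\,[d(x),x]\,y\,[d(x),x]\,z\,[d(x),x]\,t\,[d(x),x]\in T
\]
for all $w,y,z,t\in I$, up to an integer coefficient that $\mathrm{char}(R/T)\neq 2$ lets us discard.

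Finally, with $a=a_x=[d(x),x]\in I$ and $aIaIaIaIa\subseteq T$, semiprimeness of $T$ closes the argument by a short length-reduction. For fixed $w,u\in I$ put $b=awaua\in I$; then for $v\in I$ the word $bvb=awaua\,v\,awaua$ has its leftmost five copies of $a$ forming the element $awauavawa\in T$ by the identity above, so $bvb\in TR\subseteq T$; the basic fact yields $b\in T$, i.e. the shorter relation $aIaIa\subseteq T$. Repeating once with $b=awa$ gives $aIa\subseteq T$, and one last application gives $a\in T$, that is $[d(x),x]\in T$ for every $x\in I$; hence $d$ is $T$-commuting on $I$.

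I expect the whole difficulty to sit in the middle step. There is no conceptual obstacle — it is a finite, if long, computation — but the choice and order of substitutions is delicate: a premature substitution reintroduces terms that can no longer be eliminated, and one must track a growing family of $T$-congruences simultaneously. Getting from the first-order identity down to a clean alternating-product identity, rather than to an intractable tangle, is where the real care is needed; the opening linearization and the closing semiprime reduction are routine.
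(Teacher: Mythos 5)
Your plan coincides with the paper's proof essentially step for step: the same linearization to $[[d(x),x],y]+[[d(x),y],x]+[[d(y),x],x]\in T$, the same battery of substitutions ($y\mapsto yz$, $z\mapsto zx$, $y\mapsto xy$, $y\mapsto d(x)y$, $y\mapsto yd(x)$, $z\mapsto zd(x)$, $z\mapsto [d(x),x]z$, $z\mapsto z[d(x),x]t$) culminating in $[d(x),x]w[d(x),x]y[d(x),x]z[d(x),x]t[d(x),x]\in T$, and the same semiprime descent at the end. The only portion you leave unexecuted is the long but routine commutator bookkeeping in the middle, which the paper carries out in full; conversely, your explicit three-stage length-reduction from $aIaIaIaIa\subseteq T$ down to $a\in T$ usefully spells out a step the paper merely asserts with ``it forces $[d(x),x]\in T$.''
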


\begin{corollary}\label{coro-1}
	Let $R$ be semiprime ring with char$(R)\neq 2$ and $I$ be a nonzero ideal of $R.$ If $d:R\to R$ be a derivation such that $x\mapsto [d(x),x]$ is a commuting map on $I$, then $d$ is a commuting map on $I$, and hence maps $R$ into $Z(R).$
\end{corollary}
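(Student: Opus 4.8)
The plan is to obtain the first conclusion directly from Theorem~\ref{thm-01} by specializing the semiprime ideal to $T=(0)$, and then to read off the centrality statement from the classical commuting-derivation theory. Since $R$ is semiprime, $(0)$ is a semiprime ideal of $R$ --- the defining condition $aRa\subseteq(0)\Rightarrow a\in(0)$ is exactly semiprimeness of $R$ --- and since $I$ is nonzero we have $(0)\subsetneq I$; moreover char$(R)\neq 2$ means char$(R/(0))\neq 2$. The hypothesis that $x\mapsto[d(x),x]$ is commuting on $I$ is precisely the assertion $[[d(x),x],x]=0\in(0)$ for all $x\in I$. Applying Theorem~\ref{thm-01} therefore gives that $d$ is $(0)$-commuting on $I$, i.e.\ $[d(x),x]=0$ for all $x\in I$; that is, $d$ is a commuting map on $I$.

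For the remaining assertion I would use the usual substitution machinery for semiprime rings. Linearizing $[d(x),x]=0$ yields $[d(x),y]+[d(y),x]=0$ for all $x,y\in I$; replacing $y$ by $xy$ and by $yx$ and simplifying with this identity gives $d(x)[y,x]=0$ and $[y,x]d(x)=0$ for all $x,y\in I$. Next, replacing $y$ by $ry$ and by $yr$ with $r\in R$, and using the fact recalled in the Introduction (with $T=(0)$: an element $a\in I$ with $aIa=0$ must vanish), one lifts these to $d(x)[r,x]=0$ and $[r,x]d(x)=0$ for all $x\in I$, $r\in R$. Linearizing the first of these in $x$ and specializing the parameter to $d(x)$ produces $d(x)[d(x),y]=0=[d(x),y]d(x)$, so that $d(x)^2$ centralizes $I$; continuing with these relations --- this is the step that needs the care described below --- one reaches $d(I)\subseteq Z(R)$ and then $d(R)\subseteq Z(R)$. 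Equivalently, and more economically, once $d$ is known to be commuting on the nonzero ideal $I$ the conclusion $d(R)\subseteq Z(R)$ is a known theorem (a semiprime-ring, nonzero-ideal refinement of Posner's second theorem; cf.\ \cite{Posner57,Bresar2004}), which can simply be quoted.

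I expect the main obstacle to be exactly the passage from ``$d$ is commuting on $I$'' to ``$d(R)\subseteq Z(R)$'': Theorem~\ref{thm-01} only gives the former, and promoting centrality from the ideal $I$ to all of $R$ is not automatic, because in a semiprime ring an element annihilated by $I$ on both sides need not be zero --- it may lie in $\mathrm{ann}(I)$ (take $R=R_1\times R_2$, $I=R_1\times 0$). The standard way around this is to set the two relations $d(x)[r,x]=0$ and $[r,x]d(x)=0$ against each other and use the semiprimeness of $R$ (and of $I$ as an ideal of a semiprime ring) to strip off the annihilators; this is precisely the technical heart of the classical result, and it is the reason I would prefer to invoke that result rather than reproduce its proof.
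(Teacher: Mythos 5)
Your reduction of the first conclusion to Theorem~\ref{thm-01} is exactly right and is clearly the intended argument: in a semiprime ring $(0)$ is a semiprime ideal, $(0)\subsetneq I$ since $I\neq 0$, char$(R/(0))=$ char$(R)\neq 2$, and the hypothesis that $x\mapsto[d(x),x]$ is commuting on $I$ is precisely $[[d(x),x],x]=0\in(0)$ for all $x\in I$; the theorem then gives $[d(x),x]=0$ on $I$. The paper offers no proof of the corollary, so on this half there is nothing further to compare.

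The second half is where your proposal has a genuine gap, and the difficulty you yourself flag is fatal rather than merely technical. There is no ``semiprime-ring, nonzero-ideal refinement of Posner's second theorem'' asserting that a derivation which is commuting on a nonzero ideal $I$ of a semiprime ring satisfies $d(R)\subseteq Z(R)$. The classical Bell--Martindale theorem \cite{Bell87} (the one invoked for Corollary~\ref{Cor-A}) concludes only that $R$ contains a nonzero central ideal, and only under the additional hypothesis that $d$ does not vanish on $I$. Indeed, the very example you mention to explain the obstacle already refutes the statement you propose to quote: take $R=R_1\times R_2$ with $R_1=\mathbb{Q}$ and $R_2=M_2(\mathbb{Q})$, let $d$ act as $0$ on the first factor and as the inner derivation $x\mapsto[x,a]$ (with $a$ non-central) on the second, and put $I=R_1\times 0$. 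Then $R$ is semiprime of characteristic $0$, $I$ is a nonzero ideal, and $[d(x),x]=0$ for every $x\in I$, so every hypothesis of the corollary holds; yet $d(R)=0\times[R_2,a]\not\subseteq Z(R)$. Accordingly, the substitution machinery you sketch can only yield identities such as $d(x)[y,x]=0$ and $[y,x]d(x)=0$ for $x,y\in I$, which constrain $d$ on $I$ but carry no information about $d$ on elements outside $I$ (precisely because, as you note, two-sided annihilation by $I$ does not force an element to vanish in a semiprime ring). The clause ``and hence maps $R$ into $Z(R)$'' therefore cannot be established --- it is false as stated --- and your proposal does not establish it: it defers to a theorem that does not exist in the required generality. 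The conclusions actually available from $[d(x),x]=0$ on $I$ are that $d$ is commuting on $I$ and, when $d|_I\neq 0$, that $R$ contains a nonzero central ideal via \cite{Bell87}.
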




\begin{corollary}\label{coro-2}\cite[Theorem 1]{Vukman1990}
	Let $R$ be noncommutative prime ring with char$(R)\neq 2.$ If $d:R\to R$ be a derivation such that $x\mapsto [d(x),x]$ is a commuting map on $R$, then $d=0.$
\end{corollary}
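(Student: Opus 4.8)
The plan is to read this statement off from Theorem~\ref{thm-01} (or, equivalently, Corollary~\ref{coro-1}) by specializing the semiprime ideal $T$ to be the zero ideal. Since $R$ is prime, $(0)$ is a prime ideal, hence a semiprime ideal, of $R$; and since $R$ is noncommutative it is in particular nonzero, so $I:=R$ is a nonzero ideal with $T:=(0)\subsetneq I$, while $\operatorname{char}(R/T)=\operatorname{char}(R)\neq 2$. The assumption that $x\mapsto[d(x),x]$ is commuting on $R$ says exactly that $[[d(x),x],x]=0$, i.e.\ $[[d(x),x],x]\in T$, for every $x\in I$. Thus all hypotheses of Theorem~\ref{thm-01} (and of Corollary~\ref{coro-1}) are met.

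First I would invoke Theorem~\ref{thm-01} with these choices to conclude that $d$ is $T$-commuting on $I$; as $T=(0)$ this means $[d(x),x]=0$ for all $x\in R$, so $d$ is commuting on $R$. Applying Corollary~\ref{coro-1} (regarding the prime ring $R$ as a semiprime ring and taking $I=R$) then gives that $d$ maps $R$ into $Z(R)$.

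It remains to upgrade $d(R)\subseteq Z(R)$ to $d=0$ using primeness together with noncommutativity. I would argue directly: from $d(x)\in Z(R)$ we get $d(x^2)=2xd(x)\in Z(R)$, so $0=[d(x^2),r]=2d(x)[x,r]$ for all $r\in R$, and since $\operatorname{char}(R)\neq2$ this yields $d(x)[x,r]=0$; replacing $r$ by $rs$ gives $d(x)\,r\,[x,s]=0$, i.e.\ $d(x)R[x,s]=0$, for all $r,s\in R$. By primeness, for each fixed $x$ either $d(x)=0$ or $x\in Z(R)$. Since the set $\{x\in R:d(x)=0\}$ and the set $Z(R)$ are both additive subgroups of $R$ and their union is $R$, one of them must equal $R$; as $R$ is noncommutative the latter is impossible, so $d=0$. (Alternatively, one may simply quote Posner's second theorem \cite{Posner57}: if $d(R)\subseteq Z(R)$ then $[d(x),x]\in Z(R)$ for all $x$, so a nonzero $d$ would force $R$ to be commutative.) There is no genuinely hard step here; the only point demanding a little care is the bookkeeping that recasts ``$x\mapsto[d(x),x]$ commuting on $R$'' as the $T$-valued differential identity required by Theorem~\ref{thm-01} with $T=(0)$, after which the conclusion is immediate.
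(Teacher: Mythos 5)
Your proposal is correct and follows the route the paper evidently intends: specialize Theorem~\ref{thm-01} to $T=(0)$ and $I=R$ (the paper states this corollary without proof, as an immediate consequence of Theorem~\ref{thm-01} and Corollary~\ref{coro-1}). Your self-contained finishing argument --- from $d(R)\subseteq Z(R)$ to $d(x)R[x,s]=0$, then the dichotomy between two additive subgroups whose union is $R$ --- is a valid substitute for simply quoting Posner's second theorem, and the bookkeeping identifying ``commuting on $R$'' with the $T$-valued identity $[[d(x),x],x]\in(0)$ is exactly right.
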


Moreover, Almahdi et al. in \cite{Almahdi2020} presented a generalization of the well-know result of Posner, known as Posner's second theorem. Specifically, the author proved that
if $R$ is an arbitrary ring, $P$ a prime ideal of $R$, and $d$ is a derivation of $R$ such that $[[x,d(x)], y]\in P$ for
all $x,y\in R$, then $d(R)\subseteq P$ or $R/P$ is commutative. In this view, it is a natural question that whether this conclusion holds for the identity $[[d(x),x],x]\in P$ for all $x,y\in R.$ It can be observed that the following corollary of our Theorem \ref{thm-01} is an affirmation to this question.

\begin{corollary}\label{coro-3}
	Let $R$ be a ring, $P$ be a prime ideal of $R$ and $d$ be a derivation of $R.$ If char$(R/P)\neq 2$ and $[[d(x),x],x]\in P$ for all $x\in R,$ then $d(R)\subseteq P$ or $R/P$ is commutative.
	\begin{proof}
		Reducing the proof of Theorem \ref{thm-01} to prime ideal $P$ and then using \cite[Lemma 2.1]{Almahdi2020}, we get our conclusion. 
	\end{proof}
\end{corollary}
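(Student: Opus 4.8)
The plan is to obtain this as a direct consequence of Theorem~\ref{thm-01} combined with the $P$-variant of the commuting case of Posner's second theorem. First I would record the trivial reduction: if $P=R$ then $d(R)\subseteq R=P$ and there is nothing to prove, so we may assume $P\subsetneq R$. Since every prime ideal is semiprime, the pair $I=R$ and $T=P$ now satisfies all the hypotheses of Theorem~\ref{thm-01} --- $I=R$ is a nonzero ideal, $T=P$ is a semiprime ideal with $T\subsetneq I$, $\operatorname{char}(R/P)\neq 2$ by assumption, and $[[d(x),x],x]\in P$ for every $x\in I=R$. Hence Theorem~\ref{thm-01} yields that $d$ is $P$-commuting on $R$; that is,
\[
[d(x),x]\in P\qquad\text{for all }x\in R.
\]

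Next I would feed this conclusion into \cite[Lemma~2.1]{Almahdi2020}, which asserts precisely that a derivation $d$ of an arbitrary ring $R$ satisfying $[d(x),x]\in P$ for all $x\in R$ (with $P$ a prime ideal) must satisfy $d(R)\subseteq P$ or $R/P$ is commutative. This produces the desired dichotomy and finishes the proof. If one wished to avoid the external citation, the same step can be reproduced by hand in the usual Posner style: linearize $[d(x),x]\in P$ to $[d(x),y]+[d(y),x]\in P$, substitute $xy$ for $y$, use the commutator identities $[x,yz]=[x,y]z+y[x,z]$ and $[xy,z]=x[y,z]+[x,z]y$ to isolate a relation of the form $[x,y]d(x)\in P$ together with its mirror $d(x)[x,y]\in P$, and then invoke primeness of $P$ to separate the two cases.

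The main point to get right --- and really the only one --- is that Theorem~\ref{thm-01} already does all the heavy lifting, so the proof is essentially a verification that its hypotheses hold with $I=R$ and $T=P$; the characteristic condition and the strict inclusion $P\subsetneq R$ are exactly what is required, and the sole degenerate situation $P=R$ is dispatched in one line. I expect no genuine obstacle beyond this bookkeeping and the appeal to \cite[Lemma~2.1]{Almahdi2020}.
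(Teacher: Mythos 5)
Your proposal is correct and matches the paper's own argument: both apply Theorem~\ref{thm-01} with $I=R$ and $T=P$ (a prime ideal being semiprime) to get $[d(x),x]\in P$ for all $x\in R$, and then invoke \cite[Lemma~2.1]{Almahdi2020} to obtain the dichotomy. Your explicit check of the hypotheses and the degenerate case is just a more careful write-up of the same reduction.
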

\begin{theorem}\label{thm-02}
	Let $R$ be a ring with nonzero ideal $I$ and a semiprime ideal $T$ such that $T\subsetneq I.$ If char$(R/T)\neq 2$ and $\overline{[[d(x),x],x]}\in Z(R/T)$ for all $x\in I,$ then $d$ is $T$-commuting on $I$.  
	\begin{proof}
		Let us assume that
		\begin{equation}\label{Eqn-1}
			\overline{[[d(x),x],x]}\in Z(R/T) 
		\end{equation}
		for all $x\in R.$ Linearizing this expression, we get
		\begin{equation}\label{Eqn-2}
			\overline{[[d(x),x],y]+[[d(x),y],x]+[[d(y),x],x]}\in Z(R/T) 
		\end{equation}
		for all $x,y\in R.$ Replacing $y$ by $yx$ in (\ref{Eqn-2}), and using (\ref{Eqn-1}), we see that 
		\[
		\begin{split}
			\overline{[[d(x),x],y]x+y[[d(x),x],x]+[[d(x),y],x]x+2y[[d(x),x],x]+2[y,x][d(x),x]}\\
			\overline{+[[d(y),x],x]x}\in Z(R/T)
		\end{split}
		\] 
		for all $x,y\in R.$ Commuting with $x$ and using (\ref{Eqn-2}), we find
		\[
		5[y,x][[d(x),x],x]+2[[y,x],x][d(x),x]\in T
		\]
		for all $x,y\in I.$ Replacing $y$ by $yx$ in the last expression, we conclude that
		\[
		2[[y,x],x][[d(x),x],x]\in T
		\]
		for all $x,y\in I.$ The assumption on characteristic of $R/T$ facilitates us to obtain 
		\begin{equation}\label{Eqn-3}
			[[y,x],x][[d(x),x],x]\in T
		\end{equation} 
		for all $x,y\in I.$ Substituting $yd(x)$ for $y$ in (\ref{Eqn-3}), and using it to get
		\begin{equation}\label{Eqn-4}
			y[[d(x),x],x]^{2}+2[y,x][d(x),x][[d(x),x],x]\in T
		\end{equation}
		for all $x,y\in I.$ Now considering $d(x)y$ in place of $y$ in (\ref{Eqn-4}), we have 
		$$[d(x),x]y[d(x),x][[d(x),x],x]\in T$$ 
		for all $x,y\in I.$ It yields that
		$$[d(x),x][[d(x),x],x]I[d(x),x][[d(x),x],x]\subseteq T$$
		for all $x\in I.$ It implies that $[d(x),x][[d(x),x],x]\in T$ for all $x\in I.$ Our initial hypothesis forces $[d(x),x]y[[d(x),x],x]\in T$ for all $x\in I.$ Thus, from this one can observe that $[[d(x),x],x]\in T$ for all $x\in I.$ Hence, Theorem \ref{thm-01} completes the proof.
	\end{proof}
\end{theorem}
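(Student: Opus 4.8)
The plan is to reduce Theorem~\ref{thm-02} to Theorem~\ref{thm-01}: I will show that the centralizing-type hypothesis $\overline{[[d(x),x],x]}\in Z(R/T)$ already forces the commuting-type condition $[[d(x),x],x]\in T$ for every $x\in I$, after which Theorem~\ref{thm-01} applies verbatim and yields that $d$ is $T$-commuting on $I$.

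First I would linearize the hypothesis. Replacing $x$ by $x+y$ and then by $x-y$ and combining the two resulting relations (legitimate since $\mathrm{char}(R/T)\neq 2$, exactly as in the proof of Theorem~\ref{thm-01}) gives
\[
\overline{[[d(x),x],y]+[[d(x),y],x]+[[d(y),x],x]}\in Z(R/T)
\]
for all $x,y\in I$. The delicate point --- and the step I expect to be the main obstacle --- is passing from this ``central modulo $T$'' statement to an honest membership in $T$. The device is to substitute $y\mapsto yx$, expand every commutator using $[x,yz]=[x,y]z+y[x,z]$ and $[xy,z]=x[y,z]+[x,z]y$, and then \emph{commute the whole resulting relation with $x$}: since each summand of the linearized identity lies in $Z(R/T)$, its commutator with $x$ vanishes modulo $T$, and after using the original identity to cancel the surviving non-error terms one is left (up to a harmless factor $2$) with $[[y,x],x][[d(x),x],x]\in T$ for all $x,y\in I$. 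Keeping track of signs and of which expanded terms survive is the only genuinely fiddly part of the whole argument.

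From that relation the proof becomes a descent. Write $a=[d(x),x]$ and $c=[[d(x),x],x]$, and note $a,c\in I$ since $I$ is an ideal. Substituting $y\mapsto yd(x)$ in $[[y,x],x]c\in T$ and using the Leibniz rule together with the relation itself gives $yc^{2}+2[y,x]ac\in T$; substituting $y\mapsto d(x)y$ in this, left-multiplying the previous relation by $d(x)$, and subtracting isolates $2ayac\in T$, hence $a\,y\,(ac)\in T$ for all $y\in I$, i.e. $(ac)\,I\,(ac)\subseteq T$. Since $ac\in I$, the semiprime-ideal fact recorded in the introduction ($uIu\subseteq T\Rightarrow u\in T$ for $u\in I$) forces $[d(x),x][[d(x),x],x]\in T$. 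Feeding this back and using the original hypothesis once more --- moving $c$ past $y$ modulo $T$, which is exactly what centrality of $\overline{c}$ means --- yields $[d(x),x]\,y\,[[d(x),x],x]\in T$ for all $x,y\in I$; then a short commutator manipulation ($c=ax-xa$, with $T$ a two-sided ideal) upgrades this to $c\,I\,c\subseteq T$, and the semiprime-ideal fact gives $[[d(x),x],x]\in T$ for all $x\in I$. Applying Theorem~\ref{thm-01} completes the proof.
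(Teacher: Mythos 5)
Your proposal is correct and follows essentially the same route as the paper: linearize, substitute $y\mapsto yx$ and commute with $x$ to reach $[[y,x],x][[d(x),x],x]\in T$ (the paper gets there via the intermediate relation $5[y,x][[d(x),x],x]+2[[y,x],x][d(x),x]\in T$ followed by one more $y\mapsto yx$), then the same $y\mapsto yd(x)$ and $y\mapsto d(x)y$ descent to $[d(x),x][[d(x),x],x]\in T$, centrality of $\overline{[[d(x),x],x]}$ to get $[[d(x),x],x]I[[d(x),x],x]\subseteq T$, and finally Theorem~\ref{thm-01}. All the steps you outline check out, including the use of the semiprime-ideal fact $uIu\subseteq T\Rightarrow u\in T$ for $u\in I$.
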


\begin{corollary}\label{coro-1A}
	Let $R$ be semiprime ring with char$(R)\neq 2$ and $I$ be a nonzero ideal of $R.$ If $d:R\to R$ be a derivation such that $x\mapsto [d(x),x]$ is a centralizing map on $I$, then $d$ is a commuting map on $I$, and hence maps $R$ into $Z(R).$
\end{corollary}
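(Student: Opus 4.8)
The plan is to obtain Corollary~\ref{coro-1A} as the special case of Theorem~\ref{thm-02} in which the semiprime ideal is the zero ideal. Since $R$ is semiprime, $T=(0)$ is a semiprime ideal of $R$; as $I\neq(0)$ we have $T\subsetneq I$; and since $R/T=R$, the hypothesis char$(R)\neq 2$ is the same as char$(R/T)\neq 2$, while $Z(R/T)=Z(R)$. By the definition of a centralizing map, the assumption that $x\mapsto[d(x),x]$ is centralizing on $I$ says exactly that $[[d(x),x],x]\in Z(R)=Z(R/T)$ for all $x\in I$, i.e. $\overline{[[d(x),x],x]}\in Z(R/T)$ for all $x\in I$. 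Hence all the hypotheses of Theorem~\ref{thm-02} hold with this $T$.

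Applying Theorem~\ref{thm-02}, $d$ is $T$-commuting on $I$; for $T=(0)$ this reads $[d(x),x]=0$ for every $x\in I$, that is, $d$ is a commuting map on $I$. In particular $[[d(x),x],x]=0\in Z(R)$ on $I$, so $d$ then satisfies the hypothesis of Corollary~\ref{coro-1}, and that corollary yields $d(R)\subseteq Z(R)$. For a self-contained ending one may instead linearize $[d(x),x]=0$ on $I$ to get $[d(x),y]+[d(y),x]=0$, then replace $y$ by $yz$ and simplify to $d(y)[z,x]+[y,x]d(z)=0$ for all $x,y,z\in I$; specializing $x=z$ gives $[y,z]d(z)=0$, and a routine linearization together with the semiprimeness of $R$ pushes this to $d(R)\subseteq Z(R)$.

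I do not expect any genuine obstacle: the whole argument is the specialization $T=(0)$ of Theorem~\ref{thm-02}, and the only step carrying independent content --- passing from ``$d$ commutes on the nonzero ideal $I$ of the semiprime ring $R$'' to ``$d(R)\subseteq Z(R)$'' --- is a classical fact already recorded, for the commuting case, in Corollary~\ref{coro-1}. The single point that should be checked carefully is the routine translation noted above: that ``$x\mapsto[d(x),x]$ centralizing on $I$'' is \emph{verbatim} the hypothesis $\overline{[[d(x),x],x]}\in Z(R/T)$ once $T$ is taken to be $(0)$, together with the trivial equivalence char$(R)\neq 2\Longleftrightarrow$ char$(R/(0))\neq 2$.
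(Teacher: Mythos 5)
Your proposal matches the paper's (implicit) proof exactly: the corollary is Theorem~\ref{thm-02} specialized to $T=(0)$, which is a semiprime ideal precisely because $R$ is semiprime, with the final clause $d(R)\subseteq Z(R)$ delegated to the already-recorded commuting case (Corollary~\ref{coro-1}, resting on the Bell--Martindale theory \cite{Bell87}). One caution: your optional ``self-contained'' ending is not as routine as claimed --- from $[y,z]d(z)=0$ for $y,z\in I$ alone one cannot in general reach $d(R)\subseteq Z(R)$ (the classical conclusion is the existence of a nonzero central ideal), so it is safer to keep the reduction to Corollary~\ref{coro-1}.
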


In \cite{Vukman1990}, Vukman proved that if $R$ is a noncommutative prime ring $R$ with characteristic different from 2 and 3, admits a derivation $d$ such that $x\mapsto [d(x),x]$ is centralizing on $R,$ then $d=0.$ In the following corollary it is shown that the assumption of char$(R)\neq 3$ in Vukman's theorem is unnecessary. 

\begin{corollary}\label{coro-2A}
	Let $R$ be noncommutative prime ring with char$(R)\neq 2.$ If $d:R\to R$ be a derivation such that $x\mapsto [d(x),x]$ is a centralizing map on $R$, then $d=0.$
\end{corollary}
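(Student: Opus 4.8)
\emph{Proof strategy.} The plan is to specialize the semiprime-ideal results already proved to the zero ideal and then invoke the classical passage from a central (equivalently, commuting) derivation of a noncommutative prime ring to the zero derivation. First I would observe that a noncommutative prime ring $R$ is semiprime and nonzero, so $R$ is a nonzero (two-sided) ideal of itself and char$(R)\neq 2$; hence Corollary \ref{coro-1A} applies with $I=R$. Since ``$x\mapsto[d(x),x]$ is centralizing on $R$'' is precisely the hypothesis of that corollary (one may equivalently note that $[d(x),x]\in Z(R)$ gives $[[d(x),x],x]=0\in Z(R/(0))$ and feed this into Theorem \ref{thm-02} with $T=(0)$, which is legitimate because $(0)\subsetneq R$), we obtain that $d$ is commuting on $R$ and that $d(R)\subseteq Z(R)$.

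It then remains to deduce $d=0$. The quickest way is to remark that a commuting derivation trivially satisfies $[[d(x),x],x]=[0,x]=0\in Z(R)$, so Corollary \ref{coro-2} (Vukman's Theorem~1, which only needs char$(R)\neq 2$) forces $d=0$. If one prefers a self-contained finish, use $d(R)\subseteq Z(R)$ directly: expanding $d(xy)=d(x)y+xd(y)$, taking the commutator with an arbitrary $r\in R$, and using that $d(x),d(y)$ are central yields $d(x)[y,r]+[x,r]d(y)=0$; putting $y=x$ and using char$(R)\neq 2$ gives $d(x)[x,r]=0$, whence $d(x)R[x,r]=0$ (again by centrality of $d(x)$), so primeness forces $d(x)=0$ or $x\in Z(R)$ for every $x\in R$. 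As $R$ is the set-theoretic union of the two additive subgroups $\{x:d(x)=0\}$ and $Z(R)$, one of them must be all of $R$; noncommutativity excludes $Z(R)=R$, so $d=0$.

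Concerning the stated improvement over Vukman's original centralizing theorem, I would point out that nowhere in the chain Theorem \ref{thm-01} $\Rightarrow$ Theorem \ref{thm-02} $\Rightarrow$ Corollary \ref{coro-1A} $\Rightarrow$ the present corollary is there any division by $3$; only char$(R)\neq 2$ is used, which is exactly why the hypothesis char$(R)\neq 3$ is superfluous. I do not expect any genuine obstacle in this corollary: the only things to check are that the specialization $T=(0)$, $I=R$ is admissible and that the centralizing condition matches the hypotheses of the results already established, the substantive work having been done in Theorems \ref{thm-01} and \ref{thm-02}.
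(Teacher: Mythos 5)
Your proposal is correct and follows exactly the route the paper intends (the paper states this corollary without proof): specialize Corollary \ref{coro-1A}, equivalently Theorem \ref{thm-02} with $T=(0)$ and $I=R$, to get $d$ commuting and $d(R)\subseteq Z(R)$, and then use the standard fact that a noncommutative prime ring of characteristic not $2$ admits no nonzero central derivation, which you verify cleanly. One small quibble: in your parenthetical you gloss ``centralizing'' as $[d(x),x]\in Z(R)$, whereas for the map $x\mapsto[d(x),x]$ it means $[[d(x),x],x]\in Z(R)$; this does not affect your argument, since the latter is precisely the hypothesis of Theorem \ref{thm-02} with $T=(0)$ and your main route goes through Corollary \ref{coro-1A} verbatim.
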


In 2005, Cheng \cite{Cheng2005} showed that if $R$ is a prime ring and $d$ is a nonzero derivation of $R$ which satisfies the differential identity $[d(x),x]d(x)=0$ on $R,$ then $R$ is commutative. In this section, we study this identity in a semiprime ideal of an arbitrary ring $R$ and hence give a generalization of Cheng's theorem to semiprime rings. We begin by proving a couple of crucial lemmas. 

\begin{lemma}\label{lem-2.1}
	Let $R$ be a ring with nonzero ideal $I$ and a semiprime ideal $T$ such that $T\subsetneq I.$ If char$(R/T)\neq 2$ and $d:R\to R$ be a derivation such that $[d(x),x]x\in T$ for all $x\in I,$ then $d$ is $T$-commuting on $I$. 
\end{lemma}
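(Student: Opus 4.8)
The plan is to linearize the hypothesis $[d(x),x]x\in T$ and then run a substitution argument entirely parallel in spirit to the proof of Theorem~\ref{thm-01}, the goal being to force $[d(x),x]aI\,a\,[d(x),x]\subseteq T$ for a suitable element $a$ built from $x$, so that the basic fact $uIu\subseteq T \Rightarrow u\in T$ (valid for $u\in I$, as noted in the introduction) yields $[d(x),x]$ times that element lies in $T$; iterating this kind of collapse should eventually peel off the extra factor and leave $[d(x),x]\in T$ for all $x\in I$. Concretely, first I would replace $x$ by $x+y$ in $[d(x),x]x\in T$, expand using additivity of $d$ and the commutator identities, then replace $x$ by $-x$ and add, using $\operatorname{char}(R/T)\neq 2$ to cancel the $2$'s and isolate the multilinear component; call the resulting relation $(\star)$. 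This is the standard opening move and should go through mechanically.

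Next I would exploit $(\star)$ by the usual trick of substituting $yz$ (and $zx$, $xz$, etc.) for the free variable, subtracting off copies of the original identity and of $(\star)$, so as to strip the derivation off as many variables as possible and produce a relation of the shape $[d(x),x]\cdot(\text{stuff})\cdot[d(x),x]\in T$ with a free variable $z$ sitting between the two copies of $[d(x),x]$. The key intermediate target is something like $[d(x),x]\,z\,[d(x),x]\in T$ for all $x,z\in I$; once that is in hand, putting $z=[d(x),x]$ (or rather noting $[d(x),x]\in I$ since $I$ is an ideal) and using the semiprime-ideal property $uIu\subseteq T\Rightarrow u\in T$ gives $[d(x),x]\in T$ directly, and then Theorem~\ref{thm-01} is not even needed — though if the cleanest relation obtained is instead of the form $[d(x),x]x\,z\,[d(x),x]x\in T$ or $[[d(x),x],x]\in T$, I would invoke Theorem~\ref{thm-01} to finish, exactly as Theorem~\ref{thm-02} does.

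The main obstacle I anticipate is bookkeeping control over the many terms generated by the substitutions: because the hypothesis already carries an extra factor of $x$ beyond $[d(x),x]$, each substitution produces strictly more terms than in the Theorem~\ref{thm-01} computation, and one must choose the substitutions ($y\mapsto d(x)y$, $y\mapsto yd(x)$, $z\mapsto z[d(x),x]t$, and so on) in just the right order so that the pieces one cannot control get absorbed into previously derived relations rather than proliferating. A secondary subtlety is that $\operatorname{char}(R/T)\neq 2$ must be genuinely used at the final collapse (to cancel a coefficient like $2$ or $4$ in front of the product of $[d(x),x]$'s), so I would keep track of coefficients carefully throughout rather than discarding them. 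Modulo that careful bookkeeping, the argument is a routine — if lengthy — iteration of the semiprime-ideal substitution technique, and I would expect it to terminate with $[d(x),x]\in T$ for all $x\in I$, i.e.\ $d$ is $T$-commuting on $I$.
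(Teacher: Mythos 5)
Your outline is strategically sound and, in fact, the second of your two contingencies is exactly the paper's endgame: the proof does terminate at $[[d(x),x],x]\in T$ and then invokes Theorem~\ref{thm-01}. But as written this is a plan, not a proof: the entire computational core is deferred to ``careful bookkeeping,'' and that core is the actual content of the lemma. The substitutions are not interchangeable or self-correcting; an arbitrary sequence of them will simply proliferate terms, as you yourself anticipate. The concrete missing idea is the intermediate target: one must show $x[d(x),x]\in T$ for all $x\in I$, which together with the hypothesis $[d(x),x]x\in T$ gives $[[d(x),x],x]=[d(x),x]x-x[d(x),x]\in T$ at once. Your first contingency --- reaching $[d(x),x]\,z\,[d(x),x]\in T$ directly and bypassing Theorem~\ref{thm-01} --- does not materialize from this hypothesis: the extra right factor of $x$ only ever lets you peel off $x[d(x),x]$, not $[d(x),x]$ itself, which is precisely why the lemma genuinely needs Theorem~\ref{thm-01} and why your hedge between the two endings cannot be left unresolved.

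For the record, the path is short once the target is known. Linearizing (with the $-x$ trick and char$(R/T)\neq 2$) gives $[d(y),x]x+[d(x),y]x+[d(x),x]y\in T$. Substituting $y\mapsto yw$ and reducing by this identity yields
\[
[d(y),x]wx+d(y)[w,x]x-y[d(x),x]w+[y,x]d(w)x+[d(x),y]wx+[d(x),x]yw\in T;
\]
substituting $y\mapsto xy$ here and subtracting $x$ times the displayed relation leaves
\[
d(x)[y,x]wx+2[d(x),x]ywx+d(x)y[w,x]x-x[d(x),x]yw\in T.
\]
Now the decisive move is $w\mapsto w[d(x),x]$: every term except the last ends (modulo expanding one commutator) in $[d(x),x]x$ times an element of $R$, hence lies in $T$ by hypothesis, and what survives is $x[d(x),x]\,yw\,[d(x),x]\in T$. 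Taking $w=x$ gives $\bigl(x[d(x),x]\bigr)I\bigl(x[d(x),x]\bigr)\subseteq T$, so $x[d(x),x]\in T$ by the semiprime-ideal property, and the lemma follows from Theorem~\ref{thm-01} as described above. Without exhibiting this (or an equivalent) chain of substitutions, the proposal has not proved the statement.
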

\begin{proof}
	Assume that
	\begin{equation}\label{U-1}
		[d(x),x]x\in T
	\end{equation}
	for all $x\in I.$ Linearizing it, we find
	\begin{equation}\label{U-2}
		[d(y),x]x+[d(x),y]x+[d(x),x]y\in T
	\end{equation}
	for all $x,y\in I.$ Replacing $y$ by $yw$ in (\ref{U-2}), we see that
	\[
	[d(y)w+yd(w),x]x+[d(x),yw]x+[d(x),x]yw\in T.
	\]
	for all $x,y,w\in I.$ It gives us 
	\begin{equation}\label{U-3}
		\begin{split}
			[d(y),x]wx+d(y)[w,x]x+y[d(w),x]x+[y,x]d(w)x+y[d(x),w]x
			\\+[d(x),y]wx+[d(x),x]yw\in T
		\end{split}
	\end{equation}
	for all $x,y,w\in I.$ Using (\ref{U-2}) in (\ref{U-3}), we obtain 
	\begin{equation}\label{U-4}
		[d(y),x]wx+d(y)[w,x]x-y[d(x),x]w+[y,x]d(w)x+[d(x),y]wx+[d(x),x]yw\in T
	\end{equation}
	for all $x,y,w\in I.$ Considering $xy$ in the place of $y$ in (\ref{U-4}), to find
	\begin{equation}\label{U-5}
		\begin{split}
			x[d(y),x]wx+d(x)[y,x]wx+[d(x),x]ywx+xd(y)[w,x]x+d(x)y[w,x]x-xy[d(x),x]w\\
			+x[y,x]d(w)x+x[d(x),y]wx+[d(x),x]ywx+[d(x),x]xyw\in T
		\end{split}
	\end{equation}
	for all $x,y,w\in I.$ Combining (\ref{U-4}) and (\ref{U-5}), we get
	\[
	d(x)[y,x]wx+2[d(x),x]ywx+d(x)y[w,x]x-x[d(x),x]yw\in T
	\]
	for all $x,y,w\in I.$ Replacing $w$ by $w[d(x),x]$ in the last expression, and using our hypothesis, we conclude that $x[d(x),x]yw[d(x),x]\in T$ for all $x,y,w\in I.$ In particular, we get $x[d(x),x]Ix[d(x),x]\subseteq T$ for all $x\in I.$ Primeness of $T$ assures us that $x[d(x),x]\in T$ for all $x\in I.$ Along with our hypothesis, we get $[[d(x),x],x]\in T$ for all $x\in I.$ Hence the conclusion follows from Theorem \ref{thm-01}.
\end{proof}
\begin{lemma}\label{lem-2}
	Let $R$ be a ring with nonzero ideal $I$ and a semiprime ideal $T$ such that $P\subsetneq I.$ If char$(R/T)\neq 2$ and $d:R\to R$ be a derivation such that $[d^{2}(x),x]x\in T$ for all $x\in I,$ then $d$ is $T$-commuting on $I$. 
\end{lemma}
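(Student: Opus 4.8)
The plan is to reduce the hypothesis $[d^2(x),x]x\in T$ for all $x\in I$ down to the hypothesis of Lemma~\ref{lem-2.1}, namely $[d(x),x]x\in T$ for all $x\in I$, after which Lemma~\ref{lem-2.1} (and hence Theorem~\ref{thm-01}) finishes the argument. The natural first move is linearization: replace $x$ by $x+y$ in the identity and subtract off the pure-$x$ and pure-$y$ parts, then symmetrize using the $x\mapsto -x$ trick exactly as in Theorem~\ref{thm-01}, invoking $\mathrm{char}(R/T)\neq 2$ to cancel the factor $2$. This yields a multilinear $T$-valued identity involving the terms $[d^2(x),x]y$, $[d^2(x),y]x$, $[d^2(y),x]x$, together with the mixed second-derivative terms coming from $d^2(x+y)=d^2(x)+2d(\,\cdot\,)\cdots$ — concretely one must expand $d^2(xy)=d^2(x)y+2d(x)d(y)+xd^2(y)$, so the linearization will carry cross terms built from $d(x)$ and $d(y)$ as well as from $d^2$.

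Next I would feed the ring-multiplication structure into this linearized identity by substituting $y\mapsto yz$ (or $y\mapsto zy$, and $y\mapsto xy$) and using the commutator Leibniz rules $[a,bc]=[a,b]c+b[a,c]$ and the derivation rule on $d^2(yz)=d^2(y)z+2d(y)d(z)+yd^2(z)$; subtracting the already-known relations peels off the "bulk" terms and isolates a shorter identity in which $[d^2(x),x]$ and $[d(x),x]$ (or $[d(x),x]$-type expressions) appear as coefficients. The aim is to manufacture an expression of the form (element)$\,I\,$(same element)$\subseteq T$ so that, using the key fact recorded in the introduction — $aIa\subseteq T$ with $a\in I$ forces $a\in T$ — one extracts that some commutator expression involving $d^2(x)$ and $d(x)$ lies in $T$. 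Iterating this absorption/substitution cycle (each time replacing a free variable by a copy multiplied by the critical element, then comparing) should progressively strip away the $d^2$-dependence, eventually delivering $[d(x),x]x\in T$ for all $x\in I$, or a relation close enough to it that Lemma~\ref{lem-2.1} applies directly.

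The main obstacle I anticipate is bookkeeping the second-derivative cross terms: unlike in Theorem~\ref{thm-01}, where linearizing $[[d(x),x],x]$ produces only first-order $d$-terms, here the expansion $d^2(xy)=d^2(x)y+2d(x)d(y)+xd^2(y)$ injects genuinely new products $d(x)d(y)$ that are not controlled by the original hypothesis, and these must be shown to collapse into $T$ (perhaps by a separate sub-argument, e.g.\ by noting $[d^2(x),x]x\in T$ combined with a substitution $x\mapsto x^2$-type manipulation, or by replacing $y$ by elements of $I$ that interact well with $d$). A secondary subtlety is that $T$ is only semiprime, not prime, so the "cancellation" steps must be routed through the $aIa\subseteq T\Rightarrow a\in T$ mechanism rather than through a genuine primeness argument — one has to be careful to always sandwich the target element on both sides by a full copy of $I$ before concluding. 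Once $[d(x),x]x\in T$ is in hand, Lemma~\ref{lem-2.1} gives that $d$ is $T$-commuting on $I$, completing the proof.
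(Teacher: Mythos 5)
Your overall strategy---linearize, substitute products for the free variable, manufacture a sandwich of the form $a\,I\,a\,I\,a\subseteq T$, extract $[d(x),x]x\in T$, and finish by Lemma~\ref{lem-2.1}---is exactly the skeleton of the paper's argument, so the destination and the vehicle are both right. But the proposal stops at the level of a plan: the actual substitutions that make the reduction work are never exhibited, and the one place where you engage with the real difficulty you point in the wrong direction. You flag the cross term $2d(x)d(y)$ in $d^{2}(xy)=d^{2}(x)y+2d(x)d(y)+xd^{2}(y)$ as an obstacle that ``must be shown to collapse into $T$,'' perhaps by a separate sub-argument. In the paper's proof that term is not an obstacle to be killed; it is the engine. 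After linearizing to $[d^{2}(x),y]x+[d^{2}(y),x]x+[d^{2}(x),x]y\in T$, one substitutes $y\mapsto yx$: the first-order pieces of $d^{2}(yx)$ cancel against the linearized identity itself, leaving precisely $[\,y d^{2}(x)+2d(y)d(x),\,x]x\in T$. The surviving $2d(y)d(x)$ is then exploited by the further substitution $y\mapsto xy$, which (after dividing by $2$ using char$(R/T)\neq 2$) produces a relation involving only first derivatives, namely $[d(x),x]yd(x)x+d(x)[y,x]d(x)x+d(x)y[d(x),x]x\in T$. Two more substitutions ($y\mapsto yd(x)t$, then $t\mapsto xt$) turn this into $([d(x),x]x)\,y\,([d(x),x]x)\,t\,([d(x),x]x)\in T$, and semiprimeness of $T$ (via the $aIa\subseteq T\Rightarrow a\in T$ mechanism you correctly identify) gives $[d(x),x]x\in T$.

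So the gap is concrete: you assert that ``iterating this absorption/substitution cycle should progressively strip away the $d^{2}$-dependence'' without demonstrating that any such iteration terminates in $[d(x),x]x\in T$, and your proposed workaround for the $d(x)d(y)$ terms (a separate collapsing sub-argument) is not how the identity closes---those terms must be retained and converted into the first-order commutator data. As written, the proposal is a correct conjecture about the shape of the proof rather than a proof; to complete it you would need to supply the specific chain of substitutions above (or an equivalent one) and verify the cancellations at each stage.
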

\begin{proof}
	Let us assume that $[d^{2}(x),x]x\in T$ for all $x\in I.$ Let us linearize this expression to find
	\begin{equation}\label{L1-1}
		[d^{2}(x),y]x+[d^{2}(y),x]x+[d^{2}(x),x]y\in T
	\end{equation}
	for all $x,y\in I.$ Replacing $y$ by $yx$ in (\ref{L1-1}), we get $[yd^{2}(x)+2d(y)d(x),x]x\in T$ for all $x,y\in I.$ It implies
	\begin{equation}\label{L1-2}
		[y,x]d^{2}(x)x+2[d(y)d(x),x]x\in T
	\end{equation}
	for all $x,y\in I.$ Now changing $y$ to $xy$ in (\ref{L1-2}), we may infer that
	\[
	2([d(x),x]yd(x)+d(x)[yd(x),x])x\in T
	\]
	for all $x,y\in I.$ Using char$(R/T)\neq 2,$ we have
	\begin{equation}\label{L1-3}
		[d(x),x]yd(x)x+d(x)[y,x]d(x)x+d(x)y[d(x),x]x\in T
	\end{equation}
	for all $x,y\in I.$ substituting $yd(x)t$ in the place of $y$ in (\ref{L1-3}) in order to get
	\[
	\begin{split}
		\bigg([d(x),x]yd(x)+d(x)[y,x]d(x)+d(x)y[d(x),x]\bigg)td(x)x+d(x)y\bigg([d(x),x]td(x)
		\\+d(x)[t,x]d(x)+d(x)t[d(x),x]\bigg)x-d(x)y[d(x),x]td(x)x\in T
	\end{split}
	\]
	for all $x,y,t\in I.$ In view of (\ref{L1-3}), it follows that 
	\[
	\begin{split}
		\bigg([d(x),x]yd(x)+d(x)[y,x]d(x)+d(x)y[d(x),x]\bigg)td(x)x
		-d(x)y[d(x),x]td(x)\in T
	\end{split}
	\]
	for all $x,y,t\in I.$ Substituting $xt$ instead of $t$ in the last relation and again utilizing (\ref{L1-3}) to find
	\[
	d(x)y[d(x),x]xtd(x)\in T
	\]
	Thus it yields that
	\[
	([d(x),x]x)y([d(x),x]x)t([d(x),x]x)\in T
	\]
	for all $x,y,t\in I.$ Semiprimeness of $T$ forces that $[d(x),x]x\in T$ for all $x\in I.$ Conclusion follows from Lemma \ref{lem-2.1}.
\end{proof}

\begin{theorem}\label{thm-3}
	Let $R$ be a ring with nonzero ideal $I$ and a semiprime ideal $T$ such that $T\subsetneq I.$ If char$(R/T)\neq 2$ and $d:R\to R$ be a derivation such that $[d(x),x]d(x)\in T$ for all $x\in I,$ then $d$ is $T$-commuting on $I$.
\end{theorem}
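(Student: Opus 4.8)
The plan is to imitate the strategy of Theorem \ref{thm-01}: distil a manageable bilinear identity out of the hypothesis, push it through repeated substitutions until $[d(x),x]$ is trapped inside a product ranging over $I$, and finally invoke semiprimeness of $T$ together with the fact, recorded in the introduction, that $aIa\subseteq T$ forces $a\in T$ for $a\in I$. First I would linearize: replacing $x$ by $x+y$ in $[d(x),x]d(x)\in T$, then replacing $x$ by $-x$ and adding the two relations, and using $\mathrm{char}(R/T)\neq 2$, one obtains
\begin{equation}\label{star-a}
[d(x),x]d(y)+[d(x),y]d(x)+[d(y),x]d(x)\in T
\end{equation}
for all $x,y\in I$. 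Since the original identity involves only $x$, relation \eqref{star-a} is essentially the only leverage available, so everything will be squeezed out of it.

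Substituting $y\mapsto xy$ in \eqref{star-a}, absorbing the term $[d(x),x]d(x)y$ via the hypothesis and using \eqref{star-a} once more to clean up the remaining commutators, I would derive
\begin{equation}\label{star-b}
[[d(x),x],x]d(y)+2[d(x),x]yd(x)+d(x)[y,x]d(x)\in T
\end{equation}
for all $x,y\in I$. In parallel, plugging the particular values $y=d(x)$, $y=d(x)x$ and $y=xd(x)$ into \eqref{star-a} (each time discarding every summand that contains the factor $[d(x),x]d(x)$) and combining the outcomes yields the auxiliary relations $[d^{2}(x),x][d(x),x]\in T$ and $[[d(x),x],x]d^{2}(x)\in T$ for all $x\in I$. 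These, together with \eqref{star-b}, are the relations I would iterate on.

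From here it is a bookkeeping endgame. Substituting $y\mapsto yx$ in \eqref{star-b} and subtracting \eqref{star-b} eliminates the $d(y)$-term and leaves a relation of the shape $[[d(x),x],x]yd(x)-2[d(x),x]y[d(x),x]-d(x)[y,x][d(x),x]\in T$; then feeding in $y\mapsto[d(x),x]y$, $y\mapsto yd(x)$ and $y\mapsto y[d(x),x]z$ and cancelling against the identities already obtained, the mixed terms disappear and one is left with a genuine sandwich, e.g. $\big([d(x),x]x\big)\,u\,\big([d(x),x]x\big)\in T$ for all $u\in I$ (or, as the case may be, $[d(x),x]\,I\,[d(x),x]\subseteq T$). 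Semiprimeness of $T$ then gives $[d(x),x]x\in T$ for every $x\in I$, and Lemma \ref{lem-2.1} finishes the proof. (Should the computation instead terminate at $[d^{2}(x),x]x\in T$ or at $[[d(x),x],x]\in T$, one concludes via Lemma \ref{lem-2} or Theorem \ref{thm-01} respectively; any one of these reductions closes the argument.)

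The main obstacle is the relentless resurfacing of $d(y)$- and $d^{2}(x)$-terms. Because $T$ need not be invariant under $d$ — this is exactly the phenomenon highlighted in the introduction — one cannot apply $d$ to a membership relation, so every such term must be made to cancel against another or else be swallowed into the sandwich. The delicate point is to orchestrate all of this using only $\mathrm{char}(R/T)\neq 2$: several of the obvious combinations instead produce a spurious factor $3$ in front of the target expression, so the substitutions have to be chosen so that what survives is an honest product over $I$ rather than a scalar multiple of the desired conclusion.
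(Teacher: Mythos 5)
Your opening moves are sound: the linearization you call \eqref{star-a} is exactly the paper's starting identity, and your derivation of $[[d(x),x],x]d(y)+2[d(x),x]yd(x)+d(x)[y,x]d(x)\in T$ by substituting $y\mapsto xy$ and cancelling against $x\cdot\eqref{star-a}$ checks out, as does the subsequent $y\mapsto yx$ subtraction. The overall strategy --- squeeze a sandwich $a\,I\,a\subseteq T$ out of repeated substitutions and then hand off to Lemma \ref{lem-2.1}, Lemma \ref{lem-2} or Theorem \ref{thm-01} --- is also the paper's strategy. But from that point on the proposal is a plan rather than a proof, and the parts you leave unverified are precisely the parts that carry the difficulty.

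Two concrete gaps. First, your ``auxiliary relations'' $[d^{2}(x),x][d(x),x]\in T$ and $[[d(x),x],x]d^{2}(x)\in T$ are asserted, not derived: putting $y=d(x)$ into \eqref{star-a} gives $[d(x),x]d^{2}(x)+[d^{2}(x),x]d(x)\in T$, and $y=d(x)x$ gives $[d(x),x]d^{2}(x)x+2d(x)[d(x),x]d(x)+[d^{2}(x),x]xd(x)\in T$; neither of these, nor any combination you exhibit, separates out the two claimed memberships. Second, and more seriously, the entire endgame is the sentence ``the mixed terms disappear and one is left with a genuine sandwich, e.g.\ $([d(x),x]x)u([d(x),x]x)\in T$,'' followed by the admission that the computation might instead terminate at $[d^{2}(x),x]x\in T$ or at $[[d(x),x],x]\in T$. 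Not knowing which identity you will land on is a symptom that the cancellations have not actually been carried out. The paper's proof shows why this matters: the indispensable first milestone is the symmetrized relation $d(x)[d(x),x]\in T$, obtained by driving the two-variable identity to $d(x)[d(x),x]\,y\,d(x)[d(x),x]\in T$ and applying semiprimeness; only with $[[d(x),x],d(x)]\in T$, $[d(x),x]^{2}\in T$ and $[[d(x),x],x]d(x)\in T$ in hand can the later terms be cancelled, and even then the paper must pass through a genuinely three-variable substitution ($y\mapsto yz$ in the linearized identity) and the intermediate sandwich $[x,d(x)]\,t\,d^{2}(x)\in T$ before finally reaching $[d^{2}(x),x]x\in T$ and invoking Lemma \ref{lem-2}. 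None of this appears in your sketch, and your own closing remark about spurious factors of $3$ shows you are aware the naive combinations fail; resolving that is the content of the theorem, so as written the argument is incomplete.
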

\begin{proof}
	Assume that
	\begin{equation}\label{L2-1}
		[d(x),x]d(x)\in T    
	\end{equation}
	for all $x\in I.$ Linearizing this relation, we get
	\begin{equation}\label{L2-4}
		[d(x),x]d(y)+[d(x),y]d(x)+[d(y),x]d(x)\in T    
	\end{equation}
	for all $x,y\in I.$ Taking $yx$ instead of $y$ in (\ref{L2-4}) in order to obtain 
	\begin{equation}\label{L2-5}
		[d(x),x]d(y)x+[d(x),x]yd(x)+[d(x),y]xd(x)+[d(y),x]xd(x)+[y,x]d(x)^{2}\in T    
	\end{equation}
	for all $x,y\in I.$ Right multiplying (\ref{L2-4}) by $x$ and subtracting from (\ref{L2-5}), we get
	\begin{equation}\label{L2-6}
		[d(x),x]yd(x)+[d(x),y][x,d(x)]+[d(y),x][x,d(x)]+[y,x]d(x)^{2}\in T  
	\end{equation}
	for all $x,y\in I.$ Replacing $y$ by $yd(x)$ in (\ref{L2-6}), we obtain
	\begin{equation}\label{L2-7}
		\begin{split}
			[d(x),x]yd(x)^{2}+[d(x),y]d(x)[x,d(x)]+[d(y),x]d(x)[x,d(x)]+d(y)[d(x),x][x,d(x)]\\
			+y[d^{2}(x),x][x,d(x)]+[y,x]d^{2}(x)[x,d(x)]+[y,x]d(x)^{3}\in T  
		\end{split}
	\end{equation}
	for all $x,y\in I.$ Right multiplying (\ref{L2-6}) by $d(x)$ and then subtract it from (\ref{L2-7}), we get
	\begin{equation}\label{L2-8}
		\begin{split}
			[d(x),y]d(x)[x,d(x)]+[d(y),x]d(x)[x,d(x)]+d(y)[d(x),x][x,d(x)]\\
			+y[d^{2}(x),x][x,d(x)]+[y,x]d^{2}(x)[x,d(x)]\in T  
		\end{split}
	\end{equation}
	for all $x,y\in I.$ Replacing $y$ by $xy$ in (\ref{L2-8}), we find
	\begin{equation}\label{L2-9}
		\begin{split}
			[d(x),x]yd(x)[x,d(x)]+d(x)[y,x]d(x)[x,d(x)]+[d(x),x]yd(x)[x,d(x)]+\\
			d(x)y[d(x),x]xd(x)\in T  
		\end{split}
	\end{equation}
	for all $x,y\in I.$ Writing $d(x)y$ for $y$ in (\ref{L2-9}), we may infer that
	\[
	d(x)[d(x),x]yd(x)[d(x),x]\in T
	\]
	for all $x,y\in I.$ In the light of semiprimeness of $T,$ we get 
	\begin{equation}\label{L2-10}
		d(x)[d(x),x]\in T   
	\end{equation}
	for all $x\in I.$ Thus, finally we achieve that
	\begin{equation}\label{L2-11}
		[[d(x),x],d(x)]\in T
	\end{equation}
	for all $x\in I.$ From this equation, we first claim the following:
	\begin{itemize}
		\item [(A)] $[d(x),x]^{2}\in T$ for all $x\in I;$
		\item [(B)] $[[d(x),x],x]d(x)\in T$ for all $x\in I,$
	\end{itemize}
	and then we shall use it to prove our main conclusion.
	\par Now, from (\ref{L2-9}), by using (\ref{L2-10}) we have $d(x)y[d(x),x]xd(x)\in T$ for all $x,y\in I.$ Left multiplying this relation with $[d(x),x]x$ to obtain $[d(x),x]xd(x)y[d(x),x]xd(x)\in T$ for all $x,y\in I.$ 
	In view of semiprimeness of $T,$ it yields 
	\begin{equation}\label{L2-11}
		[d(x),x]xd(x)\in T
	\end{equation}
	for all $x\in I.$ Right multiplying (\ref{L2-1}) by $x,$ and then subtracting it from (\ref{L2-11}), we get $[d(x),x]^{2}\in T$ for all $x\in I,$ which is our claim (A). Again left multiplying (\ref{L2-1}) by $x$ and subtracting it from (\ref{L2-11}), we find $[[d(x),x],x]d(x)\in T$ for all $x\in I,$ which proves (B).
	\par
	Now again we are going back to (\ref{L2-4}) and substituting $yz$ for $y$ in it, to observe
	\[
	\begin{split}
		[d(x),x]d(y)z+[d(x),x]yd(z)+[d(x),y]zd(x)+y[d(x),z]d(x)+[d(y),x]zd(x)\\
		+d(y)[z,x]d(x)+y[d(z),x]d(x)+[y,x]d(z)d(x)\in T
	\end{split}
	\]
	for all $x,y,z\in I.$ It can also be seen as
	\begin{equation}\label{L2-14}
		\begin{split}
			\bigg([d(x),x]d(y)\bigg)z+[d(x),x]yd(z)+[d(x),y]zd(x)+y\bigg([d(x),z]d(x)+[d(z),x]d(x)\bigg)
			\\+[d(y),x]zd(x)+d(y)[z,x]d(x)+[y,x]d(z)d(x)\in T
		\end{split}
	\end{equation}
	for all $x,y,z\in I.$ A repeated application of (\ref{L2-4}) in (\ref{L2-14}) yields
	\begin{equation}\label{L2-15}
		\begin{split}
			-\bigg([d(x),y]+[d(y),x]\bigg)d(x)z+[d(x),x]yd(z)-y[d(x),x]d(z)+\bigg([d(x),y]
			\\+[d(y),x]\bigg)zd(x)+d(y)[z,x]d(x)+[y,x]d(z)d(x)\in T
		\end{split}
	\end{equation}
	for all $x,y,z\in I.$ Therefore, we obtain
	\begin{equation}\label{L2-16}
		[[d(x),x],y]d(z)+\bigg([d(x),y]+[d(y),x]\bigg)[z,d(x)]+d(y)[z,x]d(x)+[y,x]d(z)d(x)\in T
	\end{equation}
	for all $x,y,z\in I.$ Replacing $z$ by $zd(x)$ in (\ref{L2-16}) to assure
	\begin{equation}\label{L2-17}
		[[d(x),x],y]zd^{2}(x)+[y,x]zd^{2}(x)d(x)\in T
	\end{equation}
	for all $x,y,z\in I.$ Placing $xy$ instead of $y$ in (\ref{L2-17}), we see that $[[d(x),x],x]yzd^{2}(x)\in T$ for all $x,y,z\in I.$ It easily implies
	\begin{equation}\label{L2-18}
		[[d(x),x],x]Id^{2}(x)\subseteq T
	\end{equation}
	for all $x,y,z\in I.$ Now for $x=z$ in (\ref{L2-16}), we get
	\begin{equation}\label{L2-19}
		[[d(x),x],y]d(x)+\bigg([d(x),y]+[d(y),x]\bigg)[x,d(x)]+[y,x]d(x)^{2}\in T
	\end{equation} 
	for all $x,y\in I.$ Replacing $y$ by $yx$ in (\ref{L2-19}), to obtain
	\[
	\begin{split}
		[[d(x),x],y]xd(x)+y[[d(x),x],x]d(x)+\bigg([d(x),y]x+y[d(x),x]+[d(y),x]x+y[d(x),x]\\
		+[y,x]d(x)\bigg)[x,d(x)]+[y,x]xd(x)^{2}\in T
	\end{split}
	\]
	for all $x,y\in I.$ Using the fact that $d(x)[d(x),x]\in T$ for all $x\in I$ along with (A) and (B), it follows that
	\begin{equation}\label{L2-20}
		[[d(x),x],y]xd(x)+\bigg([d(x),y]+[d(y),x]\bigg)x[x,d(x)]+[y,x]xd(x)^{2}\in T
	\end{equation}
	for all $x,y\in I.$ Right multiplying (\ref{L2-19}) by $x$ and subtracting from (\ref{L2-20}), we arrive at
	\[
	[[d(x),x],y][x,d(x)]+\bigg([d(x),y]+[d(y),x]\bigg)[x,[x,d(x)]]+[y,x][x,d(x)^{2}]\in T
	\]
	for all $x,y\in I.$ Using $d(x)[x,d(x)]\in T$ and $[x,d(x)]d(x)\in T$ for all $x\in I,$
	It yields
	\begin{equation}\label{L2-21}
		[[d(x),x],y][x,d(x)]+\bigg([d(x),y]+[d(y),x]\bigg)[x,[x,d(x)]]\in T
	\end{equation}
	for all $x,y\in I.$ Right multiplying (\ref{L2-21}) by $td^{2}(x)$ and invoking (\ref{L2-18}) to assure that
	\begin{equation}\label{L2-22}
		[[d(x),x],y][x,d(x)]td^{2}(x)\in T
	\end{equation}
	for all $x,t,y\in I.$ Substituting $wy$ in place of $y$ in (\ref{L2-22}), we have $[[d(x),x],w]y[x,d(x)]td^{2}(x)\in T$ for all $x,y,w,t\in I.$ It is not difficult from this relation to find 
	$$[[d(x),x],w]td^{2}(x)I[[x,d(x)],w]td^{2}(x)\in T$$ for all $x,y,w,t\in I.$ It forces that $[[d(x),x],w]td^{2}(x)\in T$ for all $x,w,t\in I.$ Replacing $t$ by $[d(x),x]t$ in the last relation to yield that
	\[
	[d(x),x]w[d(x),x]td^{2}(x)\in T
	\]
	for all $x,w,t\in I.$ Replacing $w$ by $td^{2}(x)w$ in the last expression, we get
	\[
	[d(x),x]td^{2}(x)I[d(x),x]td^{2}(x)\subseteq T
	\]
	for all $x,t\in I.$ It leads us to
	\begin{equation}\label{L2-23}
		[x,d(x)]td^{2}(x)\in T
	\end{equation}
	for all $x,t\in I.$ Linearizing this relation to find
	\begin{equation}\label{L2-24}
		\bigg([y,d(x)]+[x,d(y)]\bigg)td^{2}(x)+[x,d(x)]td^{2}(y)\in T
	\end{equation}
	for all $x,y,t\in I.$ If we substitute $ud^{2}(x)t$ in place of $t$ in (\ref{L2-24}) and using (\ref{L2-23}), we have
	\begin{equation}\label{L2-25}
		\bigg([y,d(x)]+[x,d(y)]\bigg)ud^{2}(x)td^{2}(x)\in T
	\end{equation}
	for all $x,y,t,u\in I.$ Replacing $y$ by $d(x)y$ in (\ref{L2-25}) to see that
	\[
	([x,d(x)]d(y)+[x,d^{2}(x)]y+d^{2}(x)[y,x])ud^{2}(x)td^{2}(x)\in T
	\]
	for all $x,y,t,u\in I.$ Taking $y=x,$ we get $[x,d^{2}(x)]xud^{2}(x)td^{2}(x)\in T$ for all $x,t,u\in I.$ It implies $$[d^{2}(x),x]xu[d^{2}(x),x]xt[d^{2}(x),x]x\in T$$ for all $x,u,t\in I.$ Thus, we obtain $[d^{2}(x),x]x\in T$ for all $x\in I.$ By Lemma \ref{lem-2}, we have the desired conclusion.
\end{proof}
\begin{theorem}\label{thm-4}
	Let $R$ be a ring with nonzero ideal $I$ and a semiprime ideal $T$ such that $T\subsetneq I.$ If char$(R/T)\neq 2$ and $d:R\to R$ be a derivation such that $d(x)[d(x),x]\in T$ for all $x\in I,$ then $d$ is $T$-commuting on $I$. 
\end{theorem}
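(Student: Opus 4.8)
The plan is to deduce Theorem~\ref{thm-4} from Theorem~\ref{thm-3} by passing to the opposite ring $R^{op}$ (same additive group as $R$, multiplication $a\ast b:=ba$), so that essentially no new computation is needed. First I would verify the routine compatibilities. The map $d$ is again a derivation of $R^{op}$, since the identity $d(yx)=d(y)x+yd(x)$ is precisely $d(a\ast b)=d(a)\ast b+a\ast d(b)$. A two-sided ideal of $R$ is a two-sided ideal of $R^{op}$, so $I$ remains a nonzero ideal. Since $aR^{op}a=aRa$ for every $a\in R$, the set $T$ is still a semiprime ideal of $R^{op}$, and $T\subsetneq I$ is a statement about sets, hence unchanged. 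Finally $R^{op}/T$ and $R/T$ have the same additive group, so char$(R^{op}/T)\neq 2$.

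The second step is the translation of the hypothesis. Writing $[\,\cdot,\cdot\,]_{op}$ for the commutator computed in $R^{op}$, we have $[a,b]_{op}=ba-ab=-[a,b]$, and therefore
\[
[d(x),x]_{op}\ast d(x)\;=\;d(x)\bigl(-[d(x),x]\bigr)\;=\;-\,d(x)[d(x),x],
\]
which lies in $T$ for every $x\in I$ precisely because $d(x)[d(x),x]\in T$ is the hypothesis of Theorem~\ref{thm-4} and $T$ is closed under negation. Thus, regarded inside $R^{op}$, the derivation $d$ satisfies $[d(x),x]_{op}\ast d(x)\in T$ for all $x\in I$, which is exactly the hypothesis of Theorem~\ref{thm-3}. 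Applying that theorem to $(R^{op},I,T,d)$ gives $[d(x),x]_{op}\in T$ for all $x\in I$; since $[d(x),x]_{op}=-[d(x),x]$, this is the assertion that $d$ is $T$-commuting on $I$.

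I do not anticipate a genuine obstacle: the only delicate point is the left/right bookkeeping, i.e.\ making sure that the hypothesis of Theorem~\ref{thm-4} corresponds under $R\mapsto R^{op}$ to the hypothesis of Theorem~\ref{thm-3} and not to a reversed identity. For a reader who prefers to argue inside $R$, one can instead proceed directly: linearizing $d(x)[d(x),x]\in T$ yields $d(y)[d(x),x]+d(x)[d(y),x]+d(x)[d(x),y]\in T$ for all $x,y\in I$, and then mirroring (with left and right interchanged) the chain of substitutions that led to equation~(\ref{L2-10}) in the proof of Theorem~\ref{thm-3}---which is legitimate since $I$ and $T$ are two-sided and $T$ is semiprime---produces $[d(x),x]d(x)\in T$ for all $x\in I$, after which Theorem~\ref{thm-3} applies verbatim.
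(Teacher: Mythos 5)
Your argument is correct, and its main route is genuinely different from (and tighter than) the one the paper takes. The paper's proof of Theorem~\ref{thm-4} merely asserts that ``with slight modification'' of the computations in Theorem~\ref{thm-3} one can pass from $d(x)[d(x),x]\in T$ to $[d(x),x]d(x)\in T$ and then re-run the same proof; this is essentially your fallback remark in the last paragraph, and it leaves to the reader the burden of checking that every substitution in a long chain really does mirror correctly. Your opposite-ring reduction avoids that entirely: once you have verified (as you do) that $d$ is a derivation of $R^{op}$, that $I$ and $T$ retain their properties because $aR^{op}a=aRa$ and two-sided ideals are preserved, that the characteristic condition is additive and hence unchanged, and that $[d(x),x]_{op}\ast d(x)=-d(x)[d(x),x]$, Theorem~\ref{thm-3} applies as a black box and returns $[d(x),x]_{op}=-[d(x),x]\in T$. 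The sign bookkeeping is handled correctly throughout. What the opposite-ring approach buys is rigor and brevity --- no step of the proof of Theorem~\ref{thm-3} needs to be re-examined --- at the cost of a small amount of formal setup; what the paper's (and your alternative) approach buys is staying inside $R$, but at the price of an unverified ``symmetry of the computation'' claim. If anything, your version is the one that should appear in print.
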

\begin{proof}
	As the arguments presented in the above theorem, with slight modification, from our hypothesis i.e. $d(x)[d(x),x]\in T$ for all $x\in I,$ we can obtain $[d(x),x]d(x)\in T$ for all $x\in I.$ Hence the proof of this theorem will also follows in the same way, as we have same situations in our hand.
\end{proof}

As a consequence of \cite{Bell87}, we conclude this article with the following result: 

\begin{corollary}\label{Cor-A}
	Let $R$ be a semiprime ring with characteristic different from 2. If $I$ is a nonzero ideal of $R$ and $d:R\to R$ is a derivation of $R$ such that $[d(x),x]d(x)=0$ (or $d(x)[d(x),x]=0$) for all $x\in I$, then $[d(x),x]=0$ for all $x\in I,$  hence $R$ contains a nonzero central ideal.
\end{corollary}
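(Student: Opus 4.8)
The plan is to read this off from Theorems \ref{thm-3} and \ref{thm-4} by specializing the semiprime ideal to the zero ideal, and then to invoke the Bell--Martindale machinery for the statement about a central ideal. First I would record the three routine verifications that let us take $T=(0)$: in a semiprime ring $R$ the zero ideal is a semiprime ideal (this is literally the definition, $aRa\subseteq(0)\Rightarrow a=0$); char$(R/(0))=$ char$(R)\neq 2$; and $(0)\subsetneq I$ since $I\neq(0)$. Thus the pair $(I,(0))$ satisfies the standing hypotheses imposed on $(I,T)$ throughout the paper.

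With $T=(0)$, the identity $[d(x),x]d(x)=0$ for all $x\in I$ is exactly $[d(x),x]d(x)\in T$, so Theorem \ref{thm-3} applies and gives that $d$ is $(0)$-commuting on $I$, i.e. $[d(x),x]=0$ for all $x\in I$; in the second case, $d(x)[d(x),x]=0$ for all $x\in I$ is $d(x)[d(x),x]\in T$, and Theorem \ref{thm-4} yields the same conclusion. This establishes the first assertion of the corollary.

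For the last assertion, $d$ is now a derivation of the semiprime ring $R$ that is commuting, hence centralizing, on the nonzero ideal $I$, and the cited theorem of Bell and Martindale \cite{Bell87} on centralizing derivations of a semiprime ring on a nonzero one-sided ideal yields that $R$ contains a nonzero central ideal. If one prefers a self-contained argument, one can instead use that a derivation commuting on a nonzero ideal of a semiprime ring maps $R$ into $Z(R)$ (the last clause of Corollary \ref{coro-1}): assuming $d\neq0$, pick $a$ with $0\neq z:=d(a)\in Z(R)$; using the centrality of $d(ar)$, $d(a)$, $d(r)$ together with char$(R)\neq 2$ one is led rapidly to $z^{2}[R,R]=0$, so every element of $z^{2}R$ is central, and $z^{2}R\neq0$ because $z^{2}=0$ would force $zRz=z^{2}R=0$ and hence $z=0$. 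So $z^{2}R$ is a nonzero central ideal of $R$.

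Honestly there is no serious obstacle here: all of the computational substance already sits inside Theorems \ref{thm-3}/\ref{thm-4} and inside \cite{Bell87}. The only points needing attention are (i) checking that $(0)$ genuinely fits the hypotheses placed on $T$ (semiprime ideal, strict containment in $I$, quotient of characteristic $\neq2$), (ii) matching the classical polynomial identity of the corollary to the $T$-valued identities of the two theorems, and (iii) the mild degeneracy that the central-ideal conclusion should be read with $d$ nonzero in mind — when $d$ vanishes on $I$ one only gets $I\,d(R)=d(R)\,I=0$ and $R$ need not contain a nonzero central ideal. A genuinely self-contained proof of the nondegenerate case would essentially reprove \cite{Bell87}: linearize $[d(x),x]=0$ on $I$ to reach $[r,x]\,I\,d(x)=0$ for all $r\in R,\ x\in I$, and then extract the central ideal via semiprimeness.
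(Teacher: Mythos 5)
Your proof is correct and is exactly the argument the paper intends (the corollary is stated without proof, introduced only as ``a consequence of \cite{Bell87}''): specialize $T=(0)$ in Theorems \ref{thm-3} and \ref{thm-4} to obtain $[d(x),x]=0$ on $I$, then invoke the Bell--Martindale theorem for the nonzero central ideal. Your caveat about the degenerate case is also well taken --- the cited Bell--Martindale result assumes the derivation is nonzero on the ideal, so the final clause of the corollary should be read with that implicit hypothesis.
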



\end{document}